\documentclass[12pt,leqno]{amsart}
\usepackage{amsmath, amsthm, amscd, amsfonts, amssymb,  xspace, latexsym, graphicx, fancyhdr,verbatim,enumitem, pinlabel}
\usepackage[colorlinks=true, pdfstartview=FitV, linkcolor=blue, citecolor=blue, urlcolor=blue]{hyperref}
\usepackage[linkcolor=b lue]{hyperref}
\usepackage{longtable}
\usepackage[all]{xy}


\theoremstyle{colon}

\swapnumbers
\theoremstyle{definition}
\newtheorem{definition}{Definition}[section]

\newtheorem{examples}[definition]{Examples}

\theoremstyle{plain}
\newtheorem{theorem}[definition]{Theorem}

\newtheorem{lemma}[definition]{Lemma}
\newtheorem{corollary}[definition]{Corollary}

\newlist{T-enum}{enumerate}{2}
\newlist{L-enum}{enumerate}{2}
\newlist{C-enum}{enumerate}{2}
\newlist{P-enum}{enumerate}{2}  
\newlist{Pf-enum}{enumerate}{2} 
\newlist{D-enum}{enumerate}{2}
\newlist{Ex-enum}{enumerate}{2}
\newlist{Exs-enum}{enumerate}{2}
\newlist{E-enum}{enumerate}{2}
\newlist{R-enum}{enumerate}{2}
\setlist[T-enum,1]{label=(\roman*),format=\bfseries\emph,leftmargin=*,labelindent=.1\parindent}
\setlist[T-enum,2]{label=(\alph*),format=\bfseries\emph,leftmargin=*,labelindent=.1\parindent}
\setlist[L-enum,1]{label=(\roman*),format=\bfseries\emph,leftmargin=*,labelindent=.1\parindent}
\setlist[L-enum,2]{label=(\alph*),format=\bfseries\emph,leftmargin=*,labelindent=.1\parindent}
\setlist[C-enum,1]{label=(\roman*),format=\bfseries\emph,leftmargin=*,labelindent=.1\parindent}
\setlist[C-enum,2]{label=(\alph*),format=\bfseries\emph,leftmargin=*,labelindent=.1\parindent}
\setlist[P-enum,1]{label=(\roman*),format=\bfseries\emph,leftmargin=*,labelindent=.1\parindent}
\setlist[P-enum,2]{label=(\alph*),format=\bfseries\emph,leftmargin=*,labelindent=.1\parindent}
\setlist[Pf-enum,1]{label=(\roman*), leftmargin=*,labelindent=.1\parindent}
\setlist[Pf-enum,2]{label=(\alph*), leftmargin=*,labelindent=.1\parindent}
\setlist[D-enum,1]{label=\textbf{\arabic*.},leftmargin=*,labelindent=.2\parindent}
\setlist[D-enum,2]{label=\textbf{(\alph*)},leftmargin=*,labelindent=.1\parindent}
\setlist[Ex-enum,1]{label=\textbf{\arabic*.},leftmargin=*,labelindent=.15\parindent}
\setlist[Exs-enum,1]{label=\textbf{\arabic*.},leftmargin=*,labelindent=.15\parindent}
\setlist[E-enum,1]{label=\textbf{\arabic*.},leftmargin=*,labelindent=.15\parindent}
\setlist[R-enum,1]{label=\textbf{\arabic*.},leftmargin=*,labelindent=.15\parindent}
\newcommand{\bra}{\ensuremath{\langle}}
\newcommand{\ket}{\ensuremath{\rangle}}

\newcommand{\supp}{\ensuremath{{\rm{supp}}}}
\newcommand{\card}{\ensuremath{{\rm{card}\,}}}

\newcommand{\ns}{\ensuremath{{{\rm{ns}}}}}

\newcommand{\rest}{\ensuremath{\!\!\downharpoonright\!}}

\makeatletter
\@namedef{subjclassname@2020}{\textup{2020} Mathematics Subject Classification}
\makeatother

\begin{document}
\title[Non-Standard Version of Egorov Algebra]{Non-Standard Version of Egorov Algebra of Generalized Functions}
\author[Todor D. Todorov]{Todor D. Todorov}
\address{Professor Emeritus of               
                        California Polytechnic State University,
                        San Luis Obispo, CA-93407, USA}
\email{ ttodorov@calpoly.edu}
\urladdr{https://math.calpoly.edu/todor-todorov}
\subjclass[2020]{Primary: 46F30; Secondary: 46F05, 46F10, 46S10, 46S20, 03H05, 03C50  35D05, 35D10, 35R05, 54B40}
\keywords{Schwartz distributions, generalized functions, Colombeau algebra, Egorov algebra, multiplication of distributions, sheaf of differential functional spaces, partial differential equations, non-standard analysis, infinitesimals, non-standard real numbers, non-standard complex numbers, transfer principle, saturation principle, underflow principle}

\begin{abstract} We consider a non-standard version of Egorov's algebra of generalized functions, with improved properties of the generalized scalars and embedding of the Schwartz distributions compared with the original standard  Egorov's version. The embedding of distributions is similar to, but different from author's works in the past and independently done by Hans Vernaeve. 
\end{abstract}
\maketitle
\section{Introduction}\label{S: Introduction}

	 Egorov's article (Egorov~\cite{yEgorov90a}) on algebra $\mathcal G(\Omega)$ on generalized functions was published  relatively soon after the arrival of Colombeau theory of generalized functions (Colombeau~\cite{jCol84a}) and from the very beginning it was treated from the mathematical community in close comparison with Colombeau theory. One striking difference in this comparison is	the \emph{simplicity of Egorov construction}: Unlike in Colombeau construction, all representatives of the generalized functions are \emph{moderate} and the \emph{ideal is relatively simple} (even  \emph{trivial} in the case of generalized scalars). This simplicity of Egorov's construction is  particularly  advantageous when one is trying to define \emph{composition between generalized functions} or \emph{generalized functions on a manifold}.  In addition to the general theory, numerous interesting applications of the theory to partial differential equations appear in (Egorov~\cite{yEgorov90a}). In spite of all of these, Egorov's theory was mostly ignored from mathematical community dealing with non-linear theories of generalized functions (standard and non-standard alike) - for one and one reason only: the embedding of Schwartz distribution into Egorov algebra $\mathcal G(\Omega)$ is not of \emph{Colombeau type} in the sense that the product on $\mathcal G(\Omega)$, if restricted to smooth functions in $\mathcal E(\Omega)$, reduces to classical product only for constant functions. We summarize this in ${\mathbb C}\subset\mathcal D^\prime(\Omega)\subset\mathcal G(\Omega)$ (compared with $\mathcal E(\Omega)\subset\mathcal D^\prime(\Omega)\subset\mathcal G(\Omega)$ in Colombeau theory). The purpose of this article is to improve the properties of the generalized scalars and the embedding of the distributions as much as possible, while preserving the rest of the attractiveness features of Egorov approach including its simplicity. 

	The non-standard version of Egorov algebra $\widehat{^*\mathcal E}(\Omega)$ of generalized functions had been studied in the past under the notation $\mathcal A(\Omega)$ in (Todorov~\cite{tdTod96}, p. 680-684) and under the notation ${^*\mathcal C}^\infty(\Omega)|\ns(^*\Omega)$ in (Vernaeve~\cite{HVernPHD}). For convenience of the reader we give an independent presentation in Section~\ref{S: Non-Standard Version of Egorov Algebra widehat*E(Omega)}. One reason to involve non-standard analysis into non-linear theory of generalized functions (Egorov or Colombeau theories alike) is to improve the properties of generalized scalars (defined as generalized functions with zero gradient). The sets of generalized scalars in both Egorov and Colombeau algebras are rings with zero divisors. In contrast, the sets of scalars,  $^*\mathbb R$ and $^*\mathbb C$, of the algebra $\widehat{^*\mathcal E}(\Omega)$ are fields, real closed and algebraically closed, respectively. 
		
	In Section~\ref{S: Non-Standard Delta-Function} we discuss the existence of a particular non-standard delta-function in the space $^*\mathcal D(\mathbb R^d)$, slightly modifying some results in Todorov~\cite{tdTod90}-\cite{tdTod92}. Here $^*\mathcal D(\mathbb R^d)$ stands for  the non-standard extension of the space of test functions $\mathcal D(\mathbb R^d)$. 
	
		In Section~\ref{S: Embedding of Distributions into widehat*E(Omega)} we define a particular embedding $\iota_\Omega: \mathcal D^\prime(\Omega)\to \widehat{^*\mathcal E}(\Omega)$ of the space of Schwartz distributions. The embedding $\iota_\Omega$ is similar to, but different the embedding of distributions in (Vernaeve~\cite{HVernPHD}); we make a short comparison below. The properties of $\iota_\Omega$ are described in Theorem~\ref{T: Properties of the Embedding}, but the differences with the previous works are best visible in Corollary~\ref{C:  Multiplication of Classical Functions}. In short, the product in  $\widehat{^*\mathcal E}(\Omega)$ reduces to the classical (pointwise) product on the ring $\mathbb C[\Omega]$ of polynomials (not on the whole $\mathcal E(\Omega)$) and in a weaker sense on the ring $\mathcal C(\Omega)$ of continuous functions. This is an improvement relative to Egorov theory (Egorov~\cite{yEgorov90a}), where the product in Egorov's algebra $\mathcal G(\Omega)$ reduces to the classical (pointwise) product only on $\mathbb C$ (if complex numbers are treated as constant functions in $\mathcal E(\Omega)$). We should mention that in both $\mathcal G(\Omega)$ and $ \widehat{^*\mathcal E}(\Omega)$ there is one more embedding (in addition to what we discussed above) of $ \mathcal E(\Omega)$ as a differential subalgebra; in our text it appears under notation $\sigma$ (see the end of Definition~\ref{D: Non-Standard Version of Egorov Algebra} and Corollary~\ref{C:  Multiplication of Classical Functions}). 	
			
	In Section~\ref{S: Regular Algebra} we introduce a differential subalgebra $\widehat{\mathcal R}_\rho(\Omega)$ of  $\widehat{^*\mathcal E}(\Omega)$, which is similar to, but different from the algebra $\mathcal G^\infty(\Omega)$ of regular generalized functions introduced and study in (Oberguggenberger~\cite{mOber92},\,\cite{mOber06}) within (standard) Colombeau theory (Colombeau~\cite{jCol84a}).  The algebra $\widehat{\mathcal R}_\rho$  does not contain the counterexample constructed in Vernaeve~\cite{HVern21}) and in that way we remove the obstacles to developing a regularity methods in non-standard setting. 

	We shortly compare our approach  based on the algebra $\widehat{^*\mathcal E}(\Omega)$ with Hans Vernaeve's work on his algebra ${^*\mathcal E}(\Omega)|\ns(^*\Omega)$ in (Vernaeve~\cite{HVernPHD}). We should mention that Vernaeve translated his theory also in standard setting (Vernaeve~\cite{HVern03}). 
\begin{itemize}
\item The algebra $\widehat{^*\mathcal E}(\Omega)$ defined here and Vernaeve's algebra ${^*\mathcal E}(\Omega)|\ns(^*\Omega)$ (Vernaeve~\cite{HVernPHD}) are the same: $
\widehat{^*\mathcal E}(\Omega)={^*\mathcal E}(\Omega)|\ns(^*\Omega)$ and $\mu(\Omega)=\ns(^*\Omega)$. So, the scalars, $^*\mathbb C$ and $^*\mathbb R$, are also the same.  \emph{The difference is only in the embeddings} of the space of distributions.
\item Vernaeve's  embedding is of \emph{Colombeau type} in the sense that, 
the product on ${^*\mathcal E}(\Omega)|\ns(^*\Omega)$ \emph{reduces on $\mathcal E(\Omega)$ to the usual classical product between smooth functions} - very much like in Colombeau theory (Colombeau~\cite{jCol84a}) as well as in its non-standard versions (Oberguggenberger\,\&Todorov~\cite{OberTod98} and Todorov\,\&Vernaeve~\cite{TodVern08}). Unfortunately, Vernaeve's embedding is defined \emph{for convex open sets  $\Omega$ only}. Hence,  the family of spaces of distributions $\{{\mathcal D^\prime}(\Omega)\}_{\Omega\in\mathcal T^d}$  fails to be a subsheaf of $\{{^*\mathcal E}(\Omega)|\ns(^*\Omega)\}_{\Omega\in\mathcal T^d}$, where $\mathcal T^d$ stands for the usual topology on $\mathbb R^d$.  Consequently, the embedding in Vernaeve's approach does not, in general, preserve the supports of distributions. 
\item In contrast to the above, our embedding $\iota_\Omega$ is  \emph{not of Colombeau type} - the product in $\widehat{^*\mathcal E}(\Omega)$ \emph{generalizes the classical product on the ring of polynomials $\mathbb C[\Omega]$ only}, not on the whole space $\mathcal E(\Omega)$ (and also on the ring of continuous functions $\mathcal C(\Omega)$  in a weaker sense - after the restriction of the functions to $\Omega$). However, our embedding  is well-defined for \emph{any open set $\Omega$ of} $\mathbb R^d$, the family  $\{{\mathcal D^\prime}(\Omega)\}_{\Omega\in\mathcal T^d}$ is a subsheaf of $\{\widehat{^*\mathcal E}(\Omega)\}_{\Omega\in\mathcal T^d}$. Consequently, $\iota_\Omega$ preserves the support of distributions (very much like in Egorov and Colombeau algebras). Whether the ``trade-off'' is worth doing, remains to be seen.
\item \emph{The question} for defining a \emph{Colombeau type of embedding} of Schwartz's distributions into $\widehat{^*\mathcal E}(\Omega)$ or ${^*\mathcal E}(\Omega)|\ns(^*\Omega)$ \emph{for every open set} $\Omega$ of $\mathbb R^d$, which preserves the support of distributions,  \emph{remains open} (with or without the requirement on the generalized scalars to be fields).
\end{itemize}	
\section{Notations and Set-Theoretical Framework}\label{S: Notations and Set-Theoretical Framework}

\begin{itemize}
\item  If $\Omega$ is an open subset of $\mathbb R^d$,  we denote by $\mathcal C(\Omega)$ the space continuous functions from $\Omega$ to $\mathbb C$. Similarly, we write $\mathcal E(\Omega)=\mathcal C^\infty(\Omega)$, $\mathcal D(\Omega)=\mathcal C_0^\infty(\Omega)$,  $\mathcal L_{loc}(\Omega)$, $\mathcal D^\prime(\Omega)$ and $\mathcal E^\prime(\Omega)$ for the popular classes of functions and distributions. The \emph{Schwartz embedding} $\mathcal S_\Omega\!: \mathcal L_{loc}(\Omega)\mapsto\mathcal D^\prime(\Omega)$  is defined by $\langle\mathcal S_\Omega (f), \varphi\big\rangle=\int_\Omega f(x)\varphi(x)\, dx$ for all $\varphi\in\mathcal D(\Omega)$\, (Vladimirov~\cite{vVladimirov}).  In addition, we let $\mathbb C[\Omega]=\mathbb C[x_1,\dots, x_d]\,\rest\Omega$, where $\mathbb C[x_1,\dots, x_d]$ stands for the ring of polynomials in $d$-many variables with coefficients in $\mathbb C$ and $\;\rest\;$ stands for the point-wise \emph{restriction}.
\item Our framework is a $\frak c_+$-saturated ultrapower non-standard model with the set of individuals $\mathbb R$, where $\frak c=\card\, \mathbb R$. For a presentation of the topic we refer to (Lindstr\o m~\cite{tLindstrom}, Loeb\,\&Wolff~\cite{LoebWolff}) and/or (the Appendix in Todorov~\cite{tdTod96}). If $S$ is a set (in the superstructure of $\mathbb R$), we write $^*S$ for the non-standard extension of $S$. In particular, $^*\mathbb N$, $^*\mathbb R$, $^*\mathbb C$, $^*\mathbb R^d$, $^*\Omega$, $^*\mathcal L_{loc}(\Omega)$, $^*\mathcal C(\Omega)$, ${^*\mathcal D}(\Omega), {^*\mathcal E}(\Omega),  {^*\mathcal D}^\prime(\Omega)$. etc., are the non-standard extensions of $\mathbb N$, $\mathbb R$, $\mathbb C$, $\mathbb R^d$, $\Omega$, $\mathcal L_{loc}(\Omega)$, $\mathcal C(\Omega)$, $\mathcal D(\Omega), \mathcal E(\Omega),  \mathcal D^\prime(\Omega)$. etc., respectively.  Recall that $^*\mathbb R$ is $\frak c_+$--saturated \emph{real closed (non-Archimedean) field} of cardinality $\frak c_+$, which contains $\mathbb R$ as a subfield. Also, $^*\mathbb C$ is an \emph{algebraically closed field} containing $\mathbb C$ as a subfield and we have the usual connection $^*\mathbb C={^*\mathbb R}(i)$.  Notice that ${^*\mathcal E}(\Omega)$ and ${^*\mathcal D}(\Omega)$  are  \emph{differentia algebras} over the field $^*\mathbb C$. Also, $^*\mathbb C$ is a differential subring of $^*\mathcal E(\Omega)$ (if the elements of $^*\mathbb C$ are treated as \emph{constant functions}). We should mention that the functions in ${^*\mathcal E}(\Omega)$ are mapping from $^*\Omega$ to $^*\mathbb C$ (not from $\Omega$ to $^*\mathbb C$) and similarly for the rest of the spaces.

\item If $X\subseteq\mathbb R^d$, we let $\mu(X)=\{x+dx: x\in X,\, dx\in{^*\mathbb R^d}, dx\approx 0\}$ for the set of \emph{near-standard points of} $^*\!X$. Here $dx\approx 0$ means that $||dx||$ is an infinitesimal in $^*\mathbb R$. If $f\in{^*\mathcal E}(\Omega)$, then the following are equivalent: (a) $f\rest\mu(\Omega)=0$;  (b) $f\rest{^*K}=0$ for all $K\Subset\Omega$. 
\end{itemize}
\section{Non-Standard Version of Egorov Algebra $\widehat{^*\mathcal E}(\Omega)$}\label{S: Non-Standard Version of Egorov Algebra widehat*E(Omega)}

\begin{definition}[Non-Standard Version of Egorov Algebra]\label{D: Non-Standard Version of Egorov Algebra} Let $\Omega$ be an open set of $\mathbb R^d$. \begin{D-enum}
\item We let $
\widehat{^*\mathcal E}(\Omega)={^*\mathcal E(\Omega)}/\mathcal N(\Omega)$, where $\mathcal N(\Omega)=\{f\in{^*\mathcal E(\Omega)}: f\rest\mu(\Omega)=0\}$.
\item We supply $\widehat{^*\mathcal E}(\Omega)$ with the operations of a differential algebra over the field $^*\mathbb C$ with the operations inherited from $^*\mathcal E(\Omega)$.
\item For every $f\in{^*\mathcal E}(\Omega)$ we let $\widehat{f}=f+\mathcal N(\Omega)$ or $\widehat{f}=f\rest\mu(\Omega)$ and refer to $\widehat{f}$ as a \emph{generalized function on $\Omega$}. By exception, we shall write simply\; $c$\;  instead of\; $\widehat{c}$\; in the particular case $c\in{^*\mathbb C}$ (if $c$ is treated as a constant function in $^*\mathcal E(\Omega)$). If $\mathcal S\subseteq{^*\mathcal E(\Omega)}$, we let $\widehat{\mathcal S}=\{\widehat f: f\in S\}$. In particular, $\widehat{^*\mathcal E(\Omega)}= \widehat{^*\mathcal E}(\Omega)$,  $\widehat{\mathcal N(\Omega)}=\{0\}$, $\widehat{^*\mathbb C}={^*\mathbb C}$ and $\widehat{^*\mathbb R}={^*\mathbb R}$.
\item For every $\widehat{f}\in\widehat{\mathcal E}(\Omega)$ we define $\widehat{f}: \mu(\Omega)\mapsto{^*\mathbb C}$ by $\widehat{f}(\xi)={^*\!f}(\xi)$.
\item Let $\mathcal O$ be an open subset of $\Omega$ and $\widehat{f}\in\widehat{^*\mathcal E}(\Omega)$. We define the \emph{restriction}  $\widehat{f}\,\rest\mathcal O\in\widehat{^*\mathcal E}(\mathcal O)$ by  $\widehat{f}\,\rest\mathcal O= \widehat{f\rest{^*\mathcal O}}$. We say that $\widehat{f}$ \emph{vanishes on} $\mathcal O$ if $\widehat{f}\,\rest\mathcal O=0$ in $\widehat{^*\mathcal E}(\mathcal O)$. The \emph{support} $\supp(\widehat{f})$ of $\widehat{f}$ is the the complement to $\Omega$ of the largest open subset of $\Omega$, on which $\widehat{f}$ vanishes.
\item Let $X$ be a Lebesgue measurable subset of $\mathbb R^d$ whose closure is a compact subset of $\Omega$. We define a (Lebesgue) \emph{integral of $\widehat{f}\in\widehat{^*\mathcal E}(\Omega)$ over} $X$ with values in $^*\mathbb C$ by the formula $\int_X\widehat{f}(x)\, dx=\int_{^*X} f(\xi)\, d\xi$.
\item We define the \emph{pairing between} $\widehat{^*\mathcal E}(\Omega)$ and $\mathcal D(\Omega)$ by $\langle\widehat{f}, \varphi\rangle=\int_{^*\Omega} f(\xi)\;{^*\!\varphi(\xi)}\, d\xi$ for all $\widehat{f}\in\widehat{^*\mathcal E}(\Omega)$ and all $\varphi\in\mathcal D(\Omega)$, where $^*\!\varphi$ stands for the non-standard extension of $\varphi$.
\item We say that two generalized functions $\widehat{f},\; \widehat{g}\in\widehat{^*\mathcal E}(\Omega)$ are \emph{weakly equal} or \emph{associated}, and write $\widehat{f}\;\cong\;\widehat{g}$, if $\bra\widehat{f}, \varphi\ket=\bra\widehat{g}, \varphi\ket$ for all $\varphi\in\mathcal D(\Omega)$.
\item We define \emph{standard embedding} $\sigma: \mathcal E(\Omega)\to{^*\mathcal E(\Omega)}$ by $\sigma(f)=\widehat{^*\!f}$, where $\widehat{^*\!f}={^*\!f}\rest\mu(\Omega)$. 
\end{D-enum}
\end{definition}
\begin{theorem}[Basic Properties of $\widehat{^*\mathcal E}(\Omega)$]\label{T: Basic Properties of widehat*E(Omega)}  
\begin{T-enum}
\item $\widehat{^*\mathcal E}(\Omega)$ is a differential algebra over the field $^*\mathbb C$. Also, the mapping $f\, +\, \mathcal N(\Omega)\mapsto f\rest\mu(\Omega)$ from $^*\mathcal E(\Omega)/\mathcal N(\Omega)$ onto $\{ f\rest\mu(\Omega): f\in{^*\mathcal E(\Omega)}\}$  is a differential algebra isomorphism (justifying the notation $\widehat{f}=f\rest\mu(\Omega)$ used in advance).
\item $^*\mathbb C=\{\widehat f\in\widehat{^*\mathcal E}(\Omega): \nabla\widehat f=0\}$ for every open connected subset $\Omega$ of $\mathbb R^d$.
\item The family\, $\{\widehat{^*\mathcal E}(\Omega)\}_{\Omega\in\mathcal T^d}$, is a sheaf of differential algebras on $\mathbb R^d$, where $\mathcal T^d$ stands for the usual topology on $\mathbb R^d$.
\item   $\sigma[\mathcal E(\Omega)]$ is a differential $\mathbb C$-subalgebra of $\widehat{^*\mathcal E}(\Omega)$, isomorphic of $\mathcal E(\Omega)$. Also, $\int_X\sigma(f)(v)\, dx=\int_X f(x)\, dx$ for all $f\in\mathcal E(\Omega)$ and all Lebesgue measurable subset $X$ of $\mathbb R^d$ with compact closure in $\Omega$. Moreover, $\{\sigma[\mathcal E(\Omega)]\}_{\Omega\in\mathcal T^d}$ is a subsheaf of $\{\widehat{^*\mathcal E}(\Omega)\}_{\Omega\in\mathcal T^d}$. 
\end{T-enum}
\end{theorem}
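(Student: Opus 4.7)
The plan is to handle the four parts sequentially, with the bulk of the work concentrated in checking that $\mathcal N(\Omega)$ is a \emph{differential} ideal for (i), combining transfer with an exhaustion of $\Omega$ for (ii), and assembling a global internal representative via a partition of unity for (iii); part (iv) is essentially formal.

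For (i), closure of $\mathcal N(\Omega)$ under sums and under multiplication by arbitrary $g\in{^*\mathcal E}(\Omega)$ is immediate from the pointwise description. For closure under $\partial_i$, I use the equivalent characterization stated at the end of Section~2: $f\in\mathcal N(\Omega)$ iff $f\rest{^*K}=0$ for every $K\Subset\Omega$. Given such an $f$ and any $K\Subset\Omega$, pick $K'$ with $K\subseteq\mathrm{int}(K')$ and $K'\Subset\Omega$; then $f$ vanishes on $^*K'$, and by transfer of the classical fact that derivatives of a smooth function vanishing on an open set vanish there, $\partial_i f\rest{^*K}=0$. The claimed isomorphism is tautological, the kernel of $f\mapsto f\rest\mu(\Omega)$ being literally $\mathcal N(\Omega)$. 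For (ii), the inclusion ``$^*\mathbb C\subseteq$'' is immediate; for the converse, exhaust the connected $\Omega$ by an increasing sequence of connected, relatively compact open $U_n$ with $U_n\Subset U_{n+1}$ and $\bigcup_n U_n=\Omega$. If $\nabla\widehat f=0$, then $\nabla f$ vanishes on each $^*\overline{U_n}$, so by transfer of the mean-value theorem $f$ equals an internal constant $c_n\in{^*\mathbb C}$ on $^*U_n$; the nesting forces $c_n=c_{n+1}=c$, hence $\widehat f=c\in{^*\mathbb C}$.

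The sheaf property (iii) has an easy separation axiom: if $\{\Omega_i\}_{i\in I}$ covers $\Omega$ and $f\rest\mu(\Omega_i)=0$ for all $i$, then $f$ vanishes on $\mu(\Omega)=\bigcup_i\mu(\Omega_i)$. For gluing, I would refine to a countable, locally finite cover $\{V_n\}_{n\in\mathbb N}$ with $V_n\Subset\Omega_{j(n)}$, together with a standard smooth partition of unity $\{\chi_n\}$ subordinate to $\{V_n\}$. Taking representatives $f_{j(n)}\in{^*\mathcal E}(\Omega_{j(n)})$ of the given $\widehat f_{j(n)}$, the internal products $^*\!\chi_n\cdot f_{j(n)}$ extend by zero to elements of ${^*\mathcal E}(\Omega)$. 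For each finite $S\subseteq\mathbb N$ the finite sum $F_S=\sum_{n\in S}{^*\!\chi_n}\cdot f_{j(n)}$ is in ${^*\mathcal E}(\Omega)$, and compatibility of the $\widehat f_i$ on overlaps ensures $F_S\rest{^*V_n}$ is independent, modulo $\mathcal N$, of the choice of $S\supseteq\{k:V_k\cap V_n\neq\emptyset\}$. By $\aleph_1$-saturation the countable family of conditions ``$f\rest{^*V_n}$ agrees with the appropriate finite partial sum'' is simultaneously satisfiable by some internal $f\in{^*\mathcal E}(\Omega)$, and that $f$ satisfies $\widehat f\rest\Omega_i=\widehat f_i$ on every $\Omega_i$ because each $\Omega_i$ is covered by the relevant $V_n$'s.

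For (iv), injectivity of $\sigma$ follows from $\Omega\subseteq\mu(\Omega)$ together with the injectivity of $f\mapsto{^*\!f}$; the algebra and derivation homomorphism properties are inherited from the $^*$-extension. The integral identity is transfer applied to the standard equality $\int_X f=\int_{^*\!X}{^*\!f}$, the right side being the $^*$-extension of a real number and therefore equal to the original. The subsheaf claim reduces to the identity $\sigma_\Omega(f)\rest\mathcal O=\sigma_\mathcal O(f\rest\mathcal O)$, which holds on the nose from the construction and the sheaf property proved in (iii). \emph{The main obstacle} I expect is the gluing step in (iii): producing an internal smooth function on $^*\Omega$ whose restriction to each $^*V_n$ matches a prescribed finite partial sum, with no \emph{a priori} global expression, is precisely the point where external partition-of-unity reasoning must be converted into an internal statement, and the $\aleph_1$-saturation argument is the essential tool.
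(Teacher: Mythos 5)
The paper itself gives no argument for this theorem -- it defers to (Todorov \cite{tdTod96}, \S 5) and Vernaeve's thesis -- so your proposal is a self-contained proof rather than a variant of an in-paper one. Parts (i), (ii) and (iv) are correct as you argue them: the differential-ideal property of $\mathcal N(\Omega)$ via the compact-set characterization and transfer, the exhaustion of a connected $\Omega$ by nested connected relatively compact open sets together with transfer of ``$\nabla g=0$ on a connected open set implies $g$ constant'', and the formal arguments for $\sigma$ (for the subsheaf claim in (iv) you should add the one line that compatibility of the $\sigma(f_i)$ on monads forces $f_i=f_j$ at the standard points of the overlaps, so the classical functions glue and the glued section is again of the form $\sigma(f)$).

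The one place that needs tightening is exactly the step you flag: the saturation argument in (iii). As written, you justify finite satisfiability by saying that $F_S\rest{^*V_n}$ is independent of $S$ ``modulo $\mathcal N$'', citing compatibility of the $\widehat f_i$. That is not enough: agreement modulo $\mathcal N$ over $V_n$ means agreement on the external set $\mu(V_n)$, and $\aleph_1$-saturation applies only to a countable family of \emph{internal} sets with the finite intersection property. The repair is that the stronger, exact statement is true and is what you should use: take $S_n\supseteq\{k: V_k\cap V_n\neq\emptyset\}$ (finite, since $\overline{V_n}$ is compact and the cover is locally finite) and define the internal conditions $C_n=\{f\in{^*\mathcal E}(\Omega): f\rest{^*V_n}=F_{S_n}\rest{^*V_n}\}$. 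If $S\supseteq S_n$, then every $k\in S\setminus S_n$ has $V_k\cap V_n=\emptyset$, hence $\supp\chi_k\cap V_n=\emptyset$ and ${^*\chi_k}$ vanishes identically on ${^*V_n}$; therefore $F_S\rest{^*V_n}=F_{S_n}\rest{^*V_n}$ \emph{exactly}, with no appeal to compatibility, and $F_{\bigcup_{n\in T}S_n}$ witnesses the finite intersection property for any finite $T$. Countable saturation then yields $f\in\bigcap_n C_n$. Compatibility of the data enters only afterwards, where you verify $\widehat f\rest\Omega_i=\widehat f_i$: for $\xi\in\mu(\Omega_i)$ with standard part $x\in V_n$, each nonvanishing term has $x\in\supp\chi_k\subseteq V_k\subseteq\Omega_{j(k)}$, so $f_{j(k)}(\xi)=f_i(\xi)$ by compatibility on $\mu(\Omega_i\cap\Omega_{j(k)})$, and $\sum_{k\in S_n}\chi_k=1$ on $V_n$ (the omitted $\chi_k$ vanish there), giving $f(\xi)=F_{S_n}(\xi)=f_i(\xi)$. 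With this reformulation your gluing argument is complete; the rest of the proposal stands as is.
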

\begin{proof}  For the proof we refer to (Todorov~\cite{tdTod96}, \S5) or/and  (Vernaeve~\cite{HVernPHD}).
\end{proof}

\section{Non-Standard Delta-Function}\label{S: Non-Standard Delta-Function}

	\quad We discuss the existence of a particular non-standard delta-function $\Delta$ in the space $^*\mathcal D(\mathbb R^d)$,  the non-standard extension $\mathcal D(\mathbb R^d)$. In this section we slightly modify similar results in Todorov~\cite{tdTod90}-\cite{tdTod92}.
\begin{lemma}[Non-Standard Delta-Function] \label{L: Non-Standard Delta-Function} For every $d\in\mathbb N$ there exists (not necessarily unique) $\Delta\in{^*\mathcal D}(\mathbb R^d)$ such that $\Delta(\xi)=0$ for all infinitely large and all finite, but non-infinitesimal $\xi\in{^*\mathbb R^d}$ and such that $
\int_{^*\mathbb R^d}\Delta(\xi)\, {^*\!\varphi}(\xi)\, d\xi= \int_{^*\mathbb R^d}\Delta(-\xi)\, {^*\!\varphi}(\xi)\, d\xi=\varphi(0)$ for all continuous functions $\varphi\in\mathcal C(\mathbb R^d)$. We let $\rho={^*\!\sup}\big\{||\xi||: \xi\in{^*\mathbb R^d},\, \Delta(\xi)\not=0\big\}$ for the radius of support of $\Delta$. Moreover, for each open $\Omega\subseteq\mathbb R^d$ and each $\varepsilon\in{^*\mathbb R_+}$ we let 
\[
\Omega_\varepsilon=\{\xi\in{^*\Omega}: {\rm dist}(\xi, \partial\Omega)\geq\varepsilon\;\, \&\;\, {\rm dist}(\xi, 0) \leq 1/\varepsilon\},
\]
and define $\Pi_\Omega: {^*\mathbb R^d}\mapsto{^*\mathbb C}$ by the formula $
\Pi_\Omega(\xi)= \int_{\Omega_{3\rho}}\Delta(\xi-\eta)\, d\eta$.
\end{lemma}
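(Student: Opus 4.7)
The plan is to construct $\Delta$ by $\mathfrak c_+$-saturation, reducing the existence claim to a finite-dimensional linear-algebra question. For each $n\in\mathbb N$ and each $\varphi\in\mathcal C(\mathbb R^d)$ I define the standard set
\[
A_{n,\varphi}\;=\;\Bigl\{\delta\in\mathcal D(\mathbb R^d)\,:\,\supp(\delta)\subseteq\overline{B(0,1/n)}\;\text{and}\;\int_{\mathbb R^d}\delta(x)\,\varphi(x)\,dx=\varphi(0)\Bigr\}.
\]
The index set has cardinality $\mathfrak c$, so once the family $\{{}^*A_{n,\varphi}\}$ is shown to have the finite intersection property, $\mathfrak c_+$-saturation will deliver an internal $\Delta\in\bigcap_{n,\varphi}{}^*A_{n,\varphi}$. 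By transfer the FIP reduces to the standard statement that for every $n\in\mathbb N$ and every finite $\{\varphi_1,\dots,\varphi_k\}\subseteq\mathcal C(\mathbb R^d)$ there is some $\delta\in\mathcal D(\mathbb R^d)$ with $\supp(\delta)\subseteq\overline{B(0,1/n)}$ and $\int\delta\,\varphi_i\,dx=\varphi_i(0)$ for every $i$.

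I would verify this finite construction by considering the $\mathbb C$-linear map $T:\mathcal D(B(0,1/n))\to\mathbb C^k$ defined by $T(\delta)=\bigl(\int\delta\,\varphi_i\,dx\bigr)_{i=1}^k$ and arguing that the target vector $(\varphi_1(0),\dots,\varphi_k(0))$ lies in its range. Any $(\alpha_i)\in\mathbb C^k$ annihilating $\mathrm{range}(T)$ satisfies $\int\delta\,\Phi\,dx=0$ for every $\delta\in\mathcal D(B(0,1/n))$, where $\Phi:=\sum_i\alpha_i\varphi_i$. Testing against positive mollifiers concentrated at an arbitrary point $y\in B(0,1/n)$ and passing to the limit forces $\Phi(y)=0$; hence $\Phi\equiv 0$ on $B(0,1/n)$, and in particular $\sum_i\alpha_i\varphi_i(0)=\Phi(0)=0$, so the target is orthogonal to every annihilator and therefore lies in the range of $T$.

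Given such a $\Delta\in\bigcap_{n,\varphi}{}^*A_{n,\varphi}$, I read off the two required conclusions directly. For every standard $n$, transfer gives $\supp(\Delta)\subseteq{}^*\overline{B(0,1/n)}$; any $\xi\in{}^*\mathbb R^d$ that is infinitely large or finite but non-infinitesimal satisfies $\|\xi\|>1/n$ for some standard $n$, forcing $\Delta(\xi)=0$. Transfer of the integral condition in $A_{n,\varphi}$ then yields $\int_{{}^*\mathbb R^d}\Delta(\xi)\,{}^*\varphi(\xi)\,d\xi=\varphi(0)$ for every $\varphi\in\mathcal C(\mathbb R^d)$, and the companion identity with $\Delta(-\xi)$ is obtained for free by applying the first identity to the continuous function $\psi(x):=\varphi(-x)$ and performing the internal change of variable $\eta=-\xi$. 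The quantity $\rho$ and the function $\Pi_\Omega$ are then introduced purely by definition; note that $\rho$ is automatically a non-negative infinitesimal of $^*\mathbb R$ since $\supp(\Delta)\subseteq\mu(0)$ by what has just been proved, which is what justifies the relevance of the $\rho$-inflated sets $\Omega_{3\rho}$ in later sections.

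The only real technical point is the finite construction, which collapses to the linear-algebra argument above. The sole subtle step is passing from $\int\delta\,\Phi\,dx=0$ for all test functions to $\Phi\equiv 0$, which uses continuity of $\Phi$ and approximation of point evaluation by mollifiers; everything else is bookkeeping about transfer and saturation.
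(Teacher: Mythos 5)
Your construction is correct and is essentially the approach the paper relies on: the paper's lemma is proved by citation to Todorov's earlier delta-function papers, which likewise obtain $\Delta$ by $\mathfrak c_+$-saturation after reducing to the finite interpolation problem (finitely many continuous $\varphi_i$, support in a prescribed small ball) solved by a linear-algebra/duality argument, exactly as you do; your derivation of the reflected identity via $\psi(x)=\varphi(-x)$ and an internal change of variables is also sound.
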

\begin{theorem}[Regularization in $^*\mathcal E(\Omega)$]\label{T: Regularization in *E(Omega)}   Let  $\Omega$ and $\Delta$ be as in Lemma~\ref{L: Non-Standard Delta-Function}. Then:
\begin{T-enum}
\item ${^*\!f}\star\Delta\in{^*\mathcal E}(\mathbb R^d)$ for every $f\in\mathcal C(\mathbb R^d)$. Here ${^*\!f}\star\Delta: {^*\mathbb R^d}\to{^*\mathbb C}$ is defined by $({^*\!f}\star\Delta)(\xi)=\int_{^*\mathbb R^d}\,{^*\!f(\eta)}\,\Delta(\xi-\eta)\, d\eta$. Moreover,  ${^*\!f}\star\Delta$ is an extension of $f$ from $\mathbb R^d$ to $^*\mathbb R^d$, in symbol, $({^*\!f}\star\Delta)\,\rest\,\mathbb R^d=f$.
\item $^*P\star\Delta={^*P}$ for all polynomials $P\in\mathbb C[\Omega]$.
\item $\rho$ is a positive infinitesimal in $^*\mathbb R$ and $\mu(\Omega)\subseteq\Omega_{2\rho}\subset\Omega_\rho\subseteq{^*\Omega}$.
\item  $\Pi_\Omega\in{^*\mathcal D(\Omega)}$, $\Pi_\Omega\rest\mu(\Omega)=1$. Moreover,  $\supp(\Pi_\Omega)=\Omega_{2\rho}$.
\item $\Pi_\Omega({^*T}\star\Delta)\in{^*\mathcal D(\Omega)}$ for all $T\in\mathcal D^\prime(\Omega)$. 
\end{T-enum}
\end{theorem}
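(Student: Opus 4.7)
The plan is to treat the five parts in the order stated, since each builds on the previous. Throughout I would use transfer of classical convolution facts combined with the defining moment property of $\Delta$.

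For part (i), I would apply transfer to the classical statement ``if $g\in\mathcal C(\mathbb R^d)$ and $\varphi\in\mathcal D(\mathbb R^d)$ then $g\star\varphi\in\mathcal C^\infty(\mathbb R^d)$'' to obtain ${^*\!f}\star\Delta\in{^*\mathcal E}(\mathbb R^d)$. To verify $({^*\!f}\star\Delta)(x)=f(x)$ at a standard $x\in\mathbb R^d$, I would change variables $\zeta=\eta-x$ in $\int{^*\!f}(\eta)\Delta(x-\eta)\,d\eta$, producing $\int\Delta(-\zeta)\,{^*g}(\zeta)\,d\zeta$ where $g(\zeta)\eqdef f(x+\zeta)$ lies in $\mathcal C(\mathbb R^d)$; applying the delta property of $\Delta$ then yields $g(0)=f(x)$. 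For part (ii), the substitution $\zeta=\xi-\eta$ converts $({^*P}\star\Delta)(\xi)$ to $\int{^*P}(\xi-\zeta)\Delta(\zeta)\,d\zeta$, and since $P$ is a polynomial I would Taylor expand $P(\xi-\zeta)$ as a finite sum $\sum_\alpha c_\alpha(\xi)\zeta^\alpha$. The moments $\int\zeta^\alpha\Delta(\zeta)\,d\zeta$ are computed by using the delta property with the continuous test functions $\varphi(\zeta)=\zeta^\alpha$; they equal $0^\alpha$, which kills every term except $\alpha=0$, leaving ${^*P}(\xi)$.

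For part (iii), since $\Delta$ vanishes at every finite non-infinitesimal point, the internal sup $\rho$ cannot exceed any standard $r>0$; hence $\rho\approx 0$. The inclusion $\mu(\Omega)\subseteq\Omega_{2\rho}$ I would establish pointwise: for $\xi=x+dx$ with $x\in\Omega$ and $dx\approx 0$, the distance ${\rm dist}(x,\partial\Omega)$ is a standard positive real (or $\partial\Omega=\emptyset$), so ${\rm dist}(\xi,\partial\Omega)$ is within an infinitesimal of it and thus exceeds $2\rho$; similarly $||\xi||$ is finite and $1/(2\rho)$ is infinite, giving ${\rm dist}(\xi,0)\leq 1/(2\rho)$. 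The inclusions $\Omega_{2\rho}\subset\Omega_\rho\subseteq{^*\Omega}$ are immediate from the definition. For part (iv), writing $\Pi_\Omega=\chi_{\Omega_{3\rho}}\star\Delta$ and transferring the classical smoothness of indicator-smooth convolution gives $\Pi_\Omega\in{^*\mathcal E}(^*\mathbb R^d)$. The support-sum rule ${\rm supp}(\chi_{\Omega_{3\rho}}\star\Delta)\subseteq\Omega_{3\rho}+\overline{B(0,\rho)}\subseteq\Omega_{2\rho}$ (shrinking the distance condition by $\rho$) gives the support bound; since $\Omega_{2\rho}$ is internally bounded away from $\partial{^*\Omega}$ and has finite internal diameter, $\Pi_\Omega\in{^*\mathcal D}(\Omega)$. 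To get $\Pi_\Omega\rest\mu(\Omega)=1$, for $\xi\in\mu(\Omega)$ the same argument as in (iii) shows the whole ball $B(\xi,\rho)$ lies in $\Omega_{3\rho}$, so the integral becomes $\int\Delta(\xi-\eta)\,d\eta=\int\Delta(\zeta)\,d\zeta=1$ by the delta property applied to $\varphi\equiv 1$.

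The main difficulty I anticipate is part (v), because the convolution ${^*T}\star\Delta$ of a distribution with $\Delta$ is only defined where translates of $\Delta$ lie in ${^*\mathcal D}(\Omega)$, namely on $\Omega_\rho$, and so ${^*T}\star\Delta$ is not a priori an element of ${^*\mathcal E}(^*\Omega)$. The plan is to use $\Pi_\Omega$ as a cut-off: by transfer of the classical fact $T\star\varphi\in\mathcal E$ on the set where $x-{\rm supp}(\varphi)\Subset\Omega$, the function $\xi\mapsto\langle{^*T}_\eta,\Delta(\xi-\eta)\rangle$ is internally $C^\infty$ on $\Omega_\rho$. Since $\Pi_\Omega$ is supported in $\Omega_{2\rho}\subset\Omega_\rho$, the product $\Pi_\Omega({^*T}\star\Delta)$ is well-defined on $\Omega_\rho$ and extends by $0$ outside $\Omega_{2\rho}$ to a function on all of ${^*\Omega}$. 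This extension is internally smooth (smoothness at the boundary of $\Omega_{2\rho}$ follows because $\Pi_\Omega$ already vanishes in a neighborhood there, by its support containment in the strictly larger set $\Omega_{3\rho}$-convolution picture) and its support is internally compact in ${^*\Omega}$, putting it in ${^*\mathcal D}(\Omega)$ as required.
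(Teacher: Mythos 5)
Your reconstruction is essentially sound, and it is considerably more explicit than the paper's own proof, which for (i)--(ii) simply cites Todorov's 1992 paper, invokes underflow for the infinitesimality of $\rho$, and refers to Vladimirov \S 4.6 for the rest. Your route through transfer of classical convolution facts plus the moment property of $\Delta$ is legitimate: in (i) the change of variables together with the $\Delta(-\zeta)$ half of the Lemma is exactly right; in (ii) the key point is that the Lemma asserts the delta property for \emph{all} of $\mathcal C(\mathbb R^d)$, so the monomials $\zeta^\alpha$ are admissible test functions and the vanishing of the higher moments follows, which is precisely what makes the polynomial case work; in (iii) your direct argument (every point where $\Delta\neq 0$ is infinitesimal, hence the internal sup is below every standard $r>0$) replaces the paper's appeal to underflow and is fine, though you leave positivity of $\rho$ and the strictness of $\Omega_{2\rho}\subset\Omega_\rho$ implicit.

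Two points need attention. First, in (iv) you prove only $\supp(\Pi_\Omega)\subseteq\Omega_{2\rho}$, while the statement asserts equality; your own estimate in fact gives $\supp(\Pi_\Omega)\subseteq\Omega_{3\rho}+\overline{B(0,\rho)}$, which is \emph{properly} contained in $\Omega_{2\rho}$ whenever the constraint ${\rm dist}(\xi,0)\leq 1/\varepsilon$ is active (e.g.\ $\Omega=\mathbb R^d$, where your bound confines the support to a ball of radius about $1/(3\rho)$ while $\Omega_{2\rho}$ has radius $1/(2\rho)$); moreover, since the higher moments of $\Delta$ vanish, $\Delta$ must change sign, so nonvanishing of $\Pi_\Omega$ throughout the collar $\Omega_{2\rho}\setminus\Omega_{4\rho}$ is not automatic either. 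So the equality of supports cannot be obtained by your argument (and deserves a remark rather than silence). Second, in (v) the justification ``smoothness at the boundary of $\Omega_{2\rho}$ follows because $\Pi_\Omega$ already vanishes in a neighborhood there'' is both unproved and inconsistent with the support claim you are trying to use; it is also unnecessary. The correct gluing is: by transfer, $\xi\mapsto\langle{^*T}(\eta),\Delta(\xi-\eta)\rangle$ is internally smooth on the $*$-open set $U=\{\xi\in{^*\Omega}:\ {\rm dist}(\xi,\partial\Omega)>\rho,\ {\rm dist}(\xi,0)<1/\rho\}$, which contains $\Omega_{2\rho}$ with an internal margin of order $\rho$; the product $\Pi_\Omega({^*T}\star\Delta)$ is internally smooth on $U$ and vanishes on $U\setminus\Omega_{2\rho}$ because $\Pi_\Omega$ does, and the transfer of the standard lemma on extending by zero off a compact set compactly contained in an open set then yields an element of ${^*\mathcal D}(\Omega)$ with support in the $*$-compact set $\Omega_{2\rho}$. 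With these repairs your proof of (i)--(iii), (v) and of all of (iv) except the reverse support inclusion is complete.
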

\begin{proof} For (i) and (ii) we refer to (Todorov~\cite{tdTod92}, where the results are based on infinite-dimensional linear algebra and saturation principle in non-standard analysis (Lindstr\o m~\cite{tLindstrom} and/or Loeb\,\&Wolff~\cite{LoebWolff}). The fact that $\rho$ is an infinitesimal follows by underflow principle (Lindstr\o m~\cite{tLindstrom}). For the standard counterpart of the rest we refer to  (Vladimirov~\cite{vVladimirov}, \S 4.6).
\end{proof}
\section{Embedding of Distributions  into $\widehat{^*\mathcal E}(\Omega)$}\label{S: Embedding of Distributions into widehat*E(Omega)}

	\quad Although the mapping $T\to\Pi_\Omega({^*T}\star\Delta)$ from $\mathcal D^\prime(\Omega)$ to $^*\mathcal E(\Omega)$ (Theorem~\ref{T: Regularization in *E(Omega)}) is injective, it does not commute with the partial derivatives $\partial^\alpha$ in $^*\mathcal E(\Omega)$. Moreover, the family $\{{^*\mathcal E}(\Omega)\}_{\Omega\in\mathcal T^d}$ is not a sheaf on $\mathbb R^d$. Thus $^*\mathcal E(\Omega)$ cannot be treated as an algebra of generalized functions on $\mathbb R^d$ we are looking for; we return to the algebra $\widehat{^*\mathcal E}(\Omega)$ defined in Section~\ref{S: Non-Standard Version of Egorov Algebra widehat*E(Omega)}.
	
\begin{definition}[Embedding of Distributions in $\widehat{^*\mathcal E}(\Omega)$]\label{D: Embedding of Distributions in widehat*E(Omega)} Let  $\Omega$ and $\Delta$ be chosen (and fixed)  as in Lemma~\ref{L: Non-Standard Delta-Function}. We define $\iota_\Omega:\mathcal D^\prime(\Omega)\mapsto\widehat{^*\mathcal E}(\Omega)$ by $\iota_\Omega(T)=\widehat{\Pi_\Omega({^*T}\star\Delta)}$ or equivalently, by $\iota_\Omega(T)=\Pi_\Omega({^*T}\star\Delta)\rest\mu(\Omega)$. Here $^*T\star\Delta: {^*\Omega}\to{^*\mathbb C}$ is defined by $(^*T\star\Delta)(\xi)=\langle{^*T}(\eta), \Delta(\xi-\eta)\rangle$ on the ground of transfer principle (Lindstr\o m~\cite{tLindstrom} and/or Loeb\,\&Wolff~\cite{LoebWolff}).
\end{definition}
\begin{theorem}[Properties of the Embedding]\label{T: Properties of the Embedding} 
\begin{T-enum}
 \item  $\iota_\Omega$ commutes with the partial derivatives on $\mathcal D^\prime(\Omega)$.  Moreover, $\langle\iota_\Omega(T), \varphi\rangle=\langle T, \varphi\rangle$ for all $\varphi\in\mathcal D(\Omega)$ (Definition~\ref{D: Non-Standard Version of Egorov Algebra}). Consequently, $\iota_\Omega$ is injective and  $\iota_\Omega[\mathcal D^\prime(\Omega)]$ is a differential $\mathbb C$-vector subspace of\,  
$\widehat{^*\mathcal E}(\Omega)$.
 \item $(\iota_\Omega\circ S_\Omega)[\mathcal D(\Omega)]$, $(\iota_\Omega\circ S_\Omega)[\mathcal E(\Omega)]$,  $(\iota_\Omega\circ S_\Omega)[\mathcal C(\Omega)]$ and $(\iota_\Omega\circ S_\Omega)[\mathcal L_{loc}(\Omega)]$ are $\mathbb C$-vector subspaces of $\widehat{^*\mathcal E}(\Omega)$. Moreover, $(\iota_\Omega\circ S_\Omega)[\mathcal D(\Omega)]$ and $(\iota_\Omega\circ S_\Omega)[\mathcal E(\Omega)]$ are differential $\mathbb C$-vector subspaces of\, $\widehat{^*\mathcal E}(\Omega)$. Also, we have $(\iota_\Omega\circ S_\Omega)(f)\cong\sigma(f)$ for all $f\in\mathcal E(\Omega)$  (Definition~\ref{D: Non-Standard Version of Egorov Algebra}).
  \item $(\iota_\Omega\circ S_\Omega)(P)=\sigma(P)$ for all polynomials $P\in\mathbb C[\Omega]$. Consequently, $(\iota_\Omega\circ S_\Omega)\big[\mathbb C[\Omega]\big]$ is a differential subring (a differential $\mathbb C$-subalgebra) of\,  $\widehat{^*\mathcal E}(\Omega)$, which is isomorphic to $ \mathbb C[\Omega]$. We summarize these in the chain of embeddings: $ \mathbb C[\Omega]\subset\mathcal D^\prime(\Omega)\subset\widehat{^*\mathcal E}(\Omega)$, after dropping $\iota_\Omega$.
 \item The family $\big\{\iota_\Omega[\mathcal D^\prime(\Omega)]\big\}_{\Omega\in\mathcal T^d}$ is subsheaf of $\{\widehat{^*\mathcal E}(\Omega)\}_{\Omega\in\mathcal T^d}$ on $\mathbb R^d$. Consequently, $\supp(T)=\supp(\iota_\Omega(T))$ for all $T\in\mathcal D^\prime(\Omega)$. 
\item  Let $f\in\mathcal C(\Omega)$ be continuous function. Then $(\iota_\Omega\circ S_\Omega)(f)$ is an extension of $f$ (from $\Omega$ to $\mu(\Omega$), i.e. $(\iota_\Omega\circ S_\Omega)(f)(x)=f(x)$ for all $x\in\Omega$. In particular,  $\partial^\alpha\,(\iota_\Omega\circ S_\Omega)(f)(x)=\partial^\alpha f(x)$ for all $f\in\mathcal E(\Omega)$, all $\alpha\in\mathbb N_0^d$ and all $x\in\Omega$. 
\item  Let $X$ and $Y$ be two open subsets of $\mathbb R^d$ and $\theta\in{\rm Diff}(X, Y)$. Let the mapping $T\to T(\theta)$, from $\mathcal D^\prime(X)$ to $\mathcal D^\prime(Y)$, stands for the change of variables in the sense of distribution theory (H\"{o}rmander~\cite{lHormander90a}, \S 6.3-\S 6.4) and (Vladimirov~\cite{vVladimirov}, p.26). We define $\theta_*: \widehat{^*\mathcal E}(X)\to \widehat{^*\mathcal E}(Y)$ by $\theta_*(\widehat f)=(f\circ{^*\theta^{-1}})\rest\mu(Y)$, where $^*\theta^{-1}$ stands for the non-standard extension of $\theta^{-1}$. Then $\theta_*(\iota_X(T))\cong\iota_Y(T(\theta))$ for all $T\in\mathcal D^\prime(X)$.
\end{T-enum}
\end{theorem}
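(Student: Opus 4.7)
The plan is to prove items (i)--(vi) in order: item (i) furnishes the pairing identity that drives (ii), (iv), and (vi), while (iii) and (v) exploit the sharper pointwise reproducing behaviour of $\Delta$ recorded in Theorem~\ref{T: Regularization in *E(Omega)}(i)--(ii).

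For commutation with $\partial^\alpha$ in (i), I would apply the Leibniz rule to the representative $\Pi_\Omega\cdot({^*T}\star\Delta)$. By transfer $\partial^\alpha({^*T}\star\Delta) = {^*(\partial^\alpha T)}\star\Delta$, while the mixed terms carry a derivative $\partial^\beta\Pi_\Omega$ with $\beta\neq 0$ which vanishes on $\mu(\Omega)$ by Theorem~\ref{T: Regularization in *E(Omega)}(iv), so these terms lie in $\mathcal N(\Omega)$. For the pairing identity, since $\varphi$ has compact support $K\Subset\Omega$ and ${^*K}\subset\mu(\Omega)$ where $\Pi_\Omega\equiv 1$, the cutoff disappears and $\langle\iota_\Omega(T),\varphi\rangle$ reduces to $\int_{^*\Omega}({^*T}\star\Delta)\,{^*\varphi}\,d\xi$; transfer of the classical convolution adjoint rewrites this as $\langle{^*T},{^*\varphi}\star\check\Delta\rangle$, and the reproducing property of $\Delta$ from Lemma~\ref{L: Non-Standard Delta-Function} (in tandem with Theorem~\ref{T: Regularization in *E(Omega)}(i) applied to $\varphi$) together with the transferred identity $\langle T,\varphi\rangle = \langle{^*T},{^*\varphi}\rangle$ collapses this to $\langle T,\varphi\rangle$. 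Injectivity then follows from injectivity of the classical pairing, and the vector-subspace statement from linearity of $\iota_\Omega$.

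Item (ii) is a short consequence: by (i), $\langle\iota_\Omega(S_\Omega f),\varphi\rangle = \int_\Omega f\varphi\,dx$, while transfer of the integral gives $\langle\sigma(f),\varphi\rangle = \int_{^*\Omega}{^*f}\,{^*\varphi}\,d\xi$ equal to the same standard number, so the two sides are associated. Item (iii) follows from Theorem~\ref{T: Regularization in *E(Omega)}(ii): since ${^*P}\star\Delta={^*P}$ exactly on ${^*\Omega}$, we have $\Pi_\Omega({^*P}\star\Delta)\rest\mu(\Omega) = {^*P}\rest\mu(\Omega) = \sigma(P)$ in $\widehat{^*\mathcal E}(\Omega)$; the $\mathbb C$-algebra structure is preserved since $\sigma$ is an algebra embedding by Theorem~\ref{T: Basic Properties of widehat*E(Omega)}(iv). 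For the sheaf statement (iv), I would show that $\iota$ commutes with restriction to any open $\mathcal O\subseteq\Omega$. On $\mu(\mathcal O)$ both $\Pi_\Omega$ and $\Pi_\mathcal O$ equal $1$, so $\iota_\mathcal O(T\rest\mathcal O) - \iota_\Omega(T)\rest\mathcal O$ is represented in $\widehat{^*\mathcal E}(\mathcal O)$ by $({^*(T\rest\mathcal O)}\star\Delta - {^*T}\star\Delta)\rest\mu(\mathcal O)$; at every $\xi\in\mu(\mathcal O)$ the function $\eta\mapsto\Delta(\xi-\eta)$ is supported in the infinitesimal $\rho$-ball about $\xi$, which lies inside ${^*\mathcal O}$ since $\mathcal O$ is open and $\rho$ is infinitesimal, and transfer of the sheaf property of $\mathcal D^\prime$ then forces the two convolutions to agree. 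Support preservation follows from this restriction-commutation together with the injectivity from (i). Item (v) is by evaluation at standard points: $\Pi_\Omega(x)=1$ for $x\in\Omega$ and $({^*f}\star\Delta)(x) = f(x)$ by Theorem~\ref{T: Regularization in *E(Omega)}(i), giving $(\iota_\Omega\circ S_\Omega)(f)(x) = f(x)$; the derivative statement for $f\in\mathcal E(\Omega)$ follows by combining this with the derivative commutation already proved in (i).

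Finally for (vi), I would test both sides against an arbitrary $\psi\in\mathcal D(Y)$. On the one hand, (i) on $Y$ together with the classical definition of the distributional pull-back give $\langle\iota_Y(T(\theta)),\psi\rangle = \langle T,\,|\det D\theta|\,(\psi\circ\theta)\rangle$, which (i) on $X$ rewrites as $\langle\iota_X(T),\,|\det D\theta|\,(\psi\circ\theta)\rangle$. On the other hand, unfolding $\theta_*$ and applying the transferred classical change-of-variables formula to $\int_{^*Y}(f\circ{^*\theta^{-1}})\,{^*\psi}\,d\xi$ produces $\int_{^*X}f\cdot{^*(\psi\circ\theta)}\cdot{^*|\det D\theta|}\,d\eta$, matching the previous expression and yielding the desired association. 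The main obstacle is the pairing identity in (i), where the simultaneous handling of the cutoff $\Pi_\Omega$ and the delta-reproducing behaviour of $\Delta$ requires care; once (i) is in place, the remaining items fall out cleanly from transfer, the sheaf axiom, and classical change of variables.
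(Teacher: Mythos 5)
Your overall strategy---deriving everything from Theorem~\ref{T: Regularization in *E(Omega)} with the exact pairing identity in (i) as the engine---matches the paper's intent (the paper itself only records the theorem as a ``relatively straightforward consequence'' of Theorem~\ref{T: Regularization in *E(Omega)}), and your treatments of (iii), (iv), (v) and of commutation with $\partial^\alpha$ are essentially sound. Two small caveats there: $\partial^\beta\Pi_\Omega\rest\mu(\Omega)=0$ does not follow formally from $\Pi_\Omega\rest\mu(\Omega)=1$, because $\mu(\Omega)$ is external and has no internal interior; you need either the explicit formula $\Pi_\Omega(\xi)=\int_{\Omega_{3\rho}}\Delta(\xi-\eta)\,d\eta$ (whose $\beta$-derivative is $\int\partial^\beta\Delta=0$ for $\xi\in\mu(\Omega)$, since the $\rho$-ball about $\xi$ lies in $\Omega_{3\rho}$) or an overspill argument; and in (v) Theorem~\ref{T: Regularization in *E(Omega)}(i) is stated for $f\in\mathcal C(\mathbb R^d)$, so for $f\in\mathcal C(\Omega)$ you must first localize by a standard cut-off, which is harmless because $\Delta(x-\cdot)$ only sees $f$ on an infinitesimal ball about the standard point $x$.

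The genuine gap is at the crux of (i). After removing the cut-off and applying transferred Fubini you correctly reach $\langle{^*T},{^*\varphi}\star\check\Delta\rangle$, but you then collapse this to $\langle{^*T},{^*\varphi}\rangle=\langle T,\varphi\rangle$ by citing Lemma~\ref{L: Non-Standard Delta-Function} together with Theorem~\ref{T: Regularization in *E(Omega)}(i). That theorem only gives $({^*\varphi}\star\check\Delta)\rest\mathbb R^d=\varphi$, i.e.\ agreement at \emph{standard} points; as internal test functions ${^*\varphi}\star\check\Delta$ and ${^*\varphi}$ are in general different (exact equality of the convolution with the function holds only for polynomials, Theorem~\ref{T: Regularization in *E(Omega)}(ii)---indeed, if it held for all $\varphi$ the embedding would be of Colombeau type on $\mathcal D(\Omega)$, contradicting Corollary~\ref{C: Multiplication of Classical Functions}(ii) and the paper's introduction), and the internal functional ${^*T}$ is not determined by the values of its argument at standard points. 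Since injectivity, (ii), the support statement in (iv) and (vi) all hinge on this exact identity, the gap propagates. A repair using only the stated properties: with $K=\supp\varphi$, write $T=\partial^\alpha g$ on a standard neighbourhood of $K$ with $g$ continuous and compactly supported; since $\Delta(\xi-\cdot)$ is supported in an infinitesimal ball, $({^*T}\star\Delta)=\partial^\alpha({^*g}\star\Delta)$ on ${^*K}$, so integration by parts and Fubini give $\langle\iota_\Omega(T),\varphi\rangle=(-1)^{|\alpha|}\int_{^*\mathbb R^d}\Delta(u)\,{^*h}(u)\,du$ where $h(u)=\int g(\eta)\,\partial^\alpha\varphi(\eta+u)\,d\eta$ is a \emph{standard continuous} function; now Lemma~\ref{L: Non-Standard Delta-Function} applies to $h$ and yields exactly $h(0)=(-1)^{|\alpha|}\langle T,\varphi\rangle\cdot(-1)^{|\alpha|}$, i.e.\ $\langle T,\varphi\rangle$. (Alternatively, invoke the stronger pointwise-kernel property of $\Delta$ established in Todorov~\cite{tdTod92}.) With (i) repaired in this way, the remainder of your outline goes through.
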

\begin{proof} Relatively straightforward  consequences from Theorem~\ref{T: Regularization in *E(Omega)}.
\end{proof}
\begin{examples}[Some Particular Generalized Functions in $\widehat{^*\mathcal E}(\Omega)$] \label{Exs: Some  Generalized Functions in widehat*E(Omega)} 
\begin{Exs-enum}
\item $\iota_{\mathbb R^d}(\delta)=\widehat\Delta$ and more generally,  $\iota_{\mathbb R^d}(\partial^\alpha\delta)=\widehat{\partial^\alpha}\Delta$ for all $\alpha\in\mathbb N_0^d$. We write these more casually as $\partial^\alpha\delta\in\widehat{^*\mathcal E}(\mathbb R^d)$.
\item $(\widehat\Delta)^n\in\widehat{^*\mathcal E}(\mathbb R^d)$ for all $n\in\mathbb N$, since $\widehat{^*\mathcal E}(\mathbb R^d)$ is an algebra. We write this more casually as $\delta^n\in\widehat{^*\mathcal E}(\mathbb R^d)$.
\item Let $f: \mathbb C\mapsto\mathbb C$ stand for $f(z)=e^z$. Clearly, $e^{x+iy}\in\mathcal E(\mathbb R^2)$. Thus $e^\Delta\in{^*\mathcal E}(\mathbb R^2)$ (we skip the asterisk in front of $^*e^z$) and $\widehat{e^\Delta}\in\widehat{^*\mathcal E}(\mathbb R^2)$. We write more causally, $e^\delta\in\widehat{^*\mathcal E}(\mathbb R^2)$. Notice that $e^\delta$ makes sense as well in (Egorov~\cite{yEgorov90a}), but not in  Colombeau algebra,  since $e^{x+iy}$ is \emph{non-moderate} in the variable $x$ (Colombeau~\cite{jCol84a}). 
\item Let temporarily write $\Delta_d$ instead of $\Delta$ indicating that $\Delta_d\in{^*\mathcal D}(\mathbb R^d)$. Let $(e_1, e_2, \dots, e_d)$ be the standard basis for $\mathbb R^d$. Then $\widehat{\Delta}_d\rest{\rm span}(e_n)\cong\widehat {\Delta}_1$, where $\widehat{\Delta}_d\rest{\rm span}(e_n):=\Delta_d\rest\mu({\rm span}(e_n))$. Notice that ${\rm span}(e_n)$ is a \emph{smooth submanifold} (not open subset) of $\mathbb R^d$.
\end{Exs-enum}
\end{examples}
	Very much like in Colombeau and Egorov theories, Schwartz distribution can be multiplied within $\widehat{^*\mathcal E}(\Omega)$, since the latter is a differential (commutative and associative) algebra. How good (or bad) is this product? In a lack of compelling applications to other branches of mathematics or physics, we making our judgement mostly by applying this product to the $\iota_\Omega$-images in $\widehat{^*\mathcal E}(\Omega)$ of the classical functions. Here is our test: 
\begin{corollary}[Multiplication of Classical Functions]\label{C: Multiplication of Classical Functions}
\begin{C-enum}
\item The product in the algebra  $\widehat{^*\mathcal E}(\Omega)$, if restricted to  $\mathbb C[\Omega]$ (more precisely, on $(\iota_\Omega\circ S_\Omega)\big[\mathbb C[\Omega]\big]$), coincides with the usual product between polynomials, i.e. for every $P, Q\in\mathbb C[\Omega]$ we have
\[
(\iota_\Omega\circ S_\Omega)(PQ)=(\iota_\Omega\circ S_\Omega)(P)\cdot(\iota_\Omega\circ S_\Omega)(Q).
\]
\item $(\iota_\Omega\circ S_\Omega)\big[\mathbb C[\Omega]\big]={\sigma}\big[\mathbb C[\Omega]\big]={\sigma}[\mathcal E(\Omega)]\cap(\iota_\Omega\circ S_\Omega)[\mathcal E(\Omega)]$.
\item  For every two continuous functions $f, g\in\mathcal C(\Omega)$ and for all (standard) $x\in\Omega$ we have $(\iota_\Omega\circ S_\Omega)(fg)(x)=(\iota_\Omega\circ S_\Omega)(f)(x)\cdot(\iota_\Omega\circ S_\Omega)(g)(x)=f(x)g(x)$.
\item $\iota_\Omega(fT)\cong\sigma(f)\, \iota_\Omega(T)$ (Definition~\ref{D: Non-Standard Version of Egorov Algebra}) for all $f\in\mathcal E(\Omega)$ and all $T\in\mathcal D^\prime(\Omega)$, where the product $fT$ is in the sense of distribution theory (Vladimirov~\cite{vVladimirov}, \S 1.10). In particular, $\iota_\Omega(P\,T)\,\cong\,(\iota_\Omega\circ S_\Omega)(P)\; \iota_\Omega(T)$ for all polynomials $P\in\mathbb C[\Omega]$ and all $T\in\mathcal D^\prime(\Omega)$.
\end{C-enum}
\end{corollary}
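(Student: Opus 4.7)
My plan is to dispatch the four claims in order; parts (i) and (iii) are essentially immediate from Theorem~\ref{T: Properties of the Embedding}, while (ii) and (iv) require small but distinct arguments. For (i), I would combine Theorem~\ref{T: Properties of the Embedding}(iii), which asserts $(\iota_\Omega\circ S_\Omega)(P)=\sigma(P)$ for every polynomial $P$, with Theorem~\ref{T: Basic Properties of widehat*E(Omega)}(iv), which makes $\sigma$ a differential $\mathbb C$-algebra homomorphism on $\mathcal E(\Omega)$; since $PQ\in\mathbb C[\Omega]$, the chain
\[
(\iota_\Omega\circ S_\Omega)(PQ)=\sigma(PQ)=\sigma(P)\sigma(Q)=(\iota_\Omega\circ S_\Omega)(P)\cdot(\iota_\Omega\circ S_\Omega)(Q)
\]
is then immediate. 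For (iii), Theorem~\ref{T: Properties of the Embedding}(v) gives $(\iota_\Omega\circ S_\Omega)(f)(x)=f(x)$ for continuous $f$ and standard $x\in\Omega$, so applying this to $f$, $g$, and $fg$ yields all three equalities at $x$.

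For (ii), the first equality is again Theorem~\ref{T: Properties of the Embedding}(iii), and the inclusion $\sigma[\mathbb C[\Omega]]\subseteq\sigma[\mathcal E(\Omega)]\cap(\iota_\Omega\circ S_\Omega)[\mathcal E(\Omega)]$ is trivial. For the reverse, take $h=\sigma(f)=(\iota_\Omega\circ S_\Omega)(g)$; Theorem~\ref{T: Properties of the Embedding}(ii) gives $\sigma(g)\cong(\iota_\Omega\circ S_\Omega)(g)=\sigma(f)$, and pairing with each $\varphi\in\mathcal D(\Omega)$ forces $f=g$ on $\Omega$. One is thus reduced to deducing $f\in\mathbb C[\Omega]$ from ${^*\!f}={^*\!f}\star\Delta$ on $\mu(\Omega)$; the forward direction is Theorem~\ref{T: Regularization in *E(Omega)}(ii), while the converse I would derive from a transfer-Taylor expansion of ${^*\!f}$ at a standard $x\in\Omega$, combined with the moment-vanishing identities for $\Delta$ underlying Theorem~\ref{T: Regularization in *E(Omega)}(ii), arguing that a non-vanishing Taylor coefficient beyond bounded degree produces a non-trivial correction at some $x+dx\in\mu(\Omega)$.

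For (iv) I would test the asserted association against $\varphi\in\mathcal D(\Omega)$. Theorem~\ref{T: Properties of the Embedding}(i) gives $\langle\iota_\Omega(fT),\varphi\rangle=\langle fT,\varphi\rangle=\langle T,f\varphi\rangle$ by the defining property of the distributional product. On the other side,
\[
\langle\sigma(f)\,\iota_\Omega(T),\varphi\rangle=\int_{^*\Omega}{^*\!f}(\xi)\,\Pi_\Omega(\xi)\,({^*T}\star\Delta)(\xi)\,{^*\!\varphi}(\xi)\,d\xi,
\]
and since $\supp(\varphi)\Subset\Omega$ the function ${^*\!\varphi}$ is concentrated in $\mu(\Omega)$, where $\Pi_\Omega=1$ by Theorem~\ref{T: Regularization in *E(Omega)}(iv); the factor $\Pi_\Omega$ therefore drops out, and transfer of Fubini together with the defining property of $\Delta$ from Lemma~\ref{L: Non-Standard Delta-Function} collapses the double integral to $\langle T,f\varphi\rangle$. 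The polynomial special case is then automatic from (i).

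The main obstacle I expect is the reverse inclusion in (ii): ruling out a non-polynomial smooth $f$ satisfying ${^*\!f}={^*\!f}\star\Delta$ identically on $\mu(\Omega)$. The Taylor expansion of a generic smooth function need not converge under transfer, so the interchange with the convolution integral has to be handled with care, and the specific moment-vanishing structure of $\Delta$ from Todorov's construction is essential to completing the argument. Everything else is bookkeeping around the embedding and regularization theorems.
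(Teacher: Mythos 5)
Parts (i) and (iii) of your plan are correct and are exactly the immediate consequences the paper has in mind (it prints no proof of the corollary at all): (i) follows from $(\iota_\Omega\circ S_\Omega)(R)=\sigma(R)$ for $R\in\mathbb C[\Omega]$ (Theorem~\ref{T: Properties of the Embedding}(iii)) together with multiplicativity of $\sigma$ (Theorem~\ref{T: Basic Properties of widehat*E(Omega)}(iv)), and (iii) from Theorem~\ref{T: Properties of the Embedding}(v). Part (iv) is essentially right, but the collapse of the double integral is cleaner and fully justified if you absorb $f$ into the test function: since ${^*\!f}\cdot{^*\varphi}={^*(f\varphi)}$ and $f\varphi\in\mathcal D(\Omega)$, after dropping $\Pi_\Omega$ on $^*\supp\varphi\subseteq\mu(\Omega)$ the right-hand pairing is $\langle\iota_\Omega(T),f\varphi\rangle=\langle T,f\varphi\rangle$ by Theorem~\ref{T: Properties of the Embedding}(i). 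The route you describe (transfer of Fubini plus the defining property of $\Delta$ in Lemma~\ref{L: Non-Standard Delta-Function}) is not by itself enough, because that property reproduces values only at standard arguments, whereas the inner variable of the double integral runs over non-standard points; the exact reproduction for arbitrary $T$ is precisely what Theorem~\ref{T: Properties of the Embedding}(i) packages.

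The genuine gap is the reverse inclusion in (ii), and the Taylor/moment argument you sketch cannot close it; in fact it collides with Lemma~\ref{L: Non-Standard Delta-Function} itself. Take $f(x)=e^{x_1+\cdots+x_d}$ restricted to $\Omega$. For $\xi\in\mu(\Omega)$ the convolution $({^*S_\Omega(f)}\star\Delta)(\xi)$ localizes to the $\rho$-ball around $\xi$ and factors as ${^*\!f}(\xi)\int_{^*\mathbb R^d}\Delta(t)\,e^{-(t_1+\cdots+t_d)}\,dt$; since $t\mapsto e^{-(t_1+\cdots+t_d)}$ is a standard continuous function, Lemma~\ref{L: Non-Standard Delta-Function} gives that the last integral equals $1$ exactly, so $(\iota_\Omega\circ S_\Omega)(f)=\sigma(f)$ although $f$ is not a polynomial. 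This shows two things: first, your heuristic that a non-vanishing Taylor coefficient beyond bounded degree must produce a non-trivial correction at some $x+dx\in\mu(\Omega)$ is false, because the higher-order corrections can cancel exactly; second, and more seriously, the identity $\sigma\big[\mathbb C[\Omega]\big]=\sigma[\mathcal E(\Omega)]\cap(\iota_\Omega\circ S_\Omega)[\mathcal E(\Omega)]$ cannot be derived from the properties of $\Delta$ listed in Lemma~\ref{L: Non-Standard Delta-Function}, since those properties themselves force the exponential into the intersection for every admissible $\Delta$. So the step you flag as the main obstacle is not merely incomplete: closing it would require strengthening or modifying the hypotheses on $\Delta$ (for instance, demanding vanishing higher moments while restricting the exact reproducing property to a smaller class of continuous functions), or else weakening the claim in (ii); your proposal as written does not prove it, and no refinement of the Taylor-plus-moments strategy alone can.
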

\section{Regular Algebra}\label{S: Regular Algebra}

	\quad The algebra $\widehat{\mathcal R}_\rho(\Omega)$ defined below is similar, but different from the algbra $\mathcal G^\infty(\Omega)$ introduced and study in (Oberguggenberger~\cite{mOber92},\,\cite{mOber06}). We should mention that $\widehat{\mathcal R}_\rho$  does not contain the counterexample in Vernaeve~\cite{HVern21}).
		
\begin{definition}[Regular Algebra]\label{D: Regular Algebra} Let  $\Omega$ and $\Delta$ be chosen (and fixed)  as in Lemma~\ref{L: Non-Standard Delta-Function}. Let  ${^\sigma\!\mathcal E}(\Omega)=\{^*\!f: f\in\mathcal E(\Omega)\}$ and $\mathcal M_\rho=\{\xi\in{^*\mathbb C}: |\xi|\leq\rho^{-n} \text{ for some } n\in\mathbb N\}$. Let $\mathcal R_\rho(\Omega)$ denote the subring of $^*\mathcal E(\Omega)$ generated by ${^\sigma\!\mathcal E}(\Omega)\cup\mathcal M_\rho$, in symbol, $\mathcal R_\rho(\Omega)={^\sigma\!\mathcal E}(\Omega)(\mathcal M_\rho)$. The \emph{algebra of $\rho$-regular functions} is defined by  $\widehat{\mathcal R}_\rho(\Omega)=\{\widehat{f}:  f\in\mathcal R_\rho(\Omega)\}$ or equivalently, by $\widehat{\mathcal R}_\rho(\Omega)=\{f\rest\mu(\Omega): f\in\mathcal R_\rho(\Omega)\}$.
\end{definition}
\begin{theorem}  Under the assumption of the above definition we have:
\begin{T-enum}
\item $\widehat{\mathcal R}_\rho$ is a differential $\mathbb C$-subalgebra of\,  $\widehat{^*\!\mathcal E}(\Omega)$. 
\item If $\widehat f\in\widehat{\mathcal R}_\rho(\Omega)$, then $(\forall \xi\in\mu(\Omega))(\exists n\in\mathbb N)(\forall\alpha\in\mathbb N_0^d)(|\partial^\alpha \widehat{f}(\xi)|\leq\rho^{-n})$.
\item $\widehat{\mathcal R}_\rho(\Omega)\cap{\iota_\Omega}[\mathcal D^\prime(\Omega)]=({\iota_\Omega}\circ S_\Omega)[\mathbb C[\Omega]]$.
\end{T-enum}
\end{theorem}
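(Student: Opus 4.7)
My plan is to treat the three parts in sequence. For part (i), I would observe that the two generating sets of $\mathcal R_\rho(\Omega)$ are closed under all $\partial^\alpha$: indeed $\partial^\alpha({^*h}) = {^*(\partial^\alpha h)} \in {^\sigma\mathcal E(\Omega)}$ and $\partial^\alpha c = 0$ for every $c\in\mathcal M_\rho\subseteq{^*\mathbb C}$ (these are constants). The Leibniz rule therefore propagates closure under $\partial^\alpha$ from the generators to the entire ring $\mathcal R_\rho(\Omega)$, and the quotient $\widehat{\mathcal R}_\rho(\Omega)$ inherits the differential $\mathbb C$-subalgebra structure from $\widehat{^*\mathcal E}(\Omega)$.

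For part (ii), every element of $\mathcal R_\rho(\Omega)$ can after distribution be brought to the form $f = \sum_{i=1}^N c_i\,{^*h_i}$ with $c_i\in\mathcal M_\rho$ and $h_i\in\mathcal E(\Omega)$, because products of ${^\sigma\mathcal E(\Omega)}$-elements stay in ${^\sigma\mathcal E(\Omega)}$ and products of $\mathcal M_\rho$-scalars stay in $\mathcal M_\rho$. I would fix such a representative of $\widehat f$ and choose $n_0$ with $|c_i|\leq\rho^{-n_0}$ for every $i$. For any $\xi\in\mu(\Omega)$ and any $\alpha$, the value $|{^*(\partial^\alpha h_i)(\xi)}|$ is infinitely close to the standard finite real $|\partial^\alpha h_i(\st\xi)|$ and therefore dominated by $\rho^{-1}$ (since $\rho^{-1}$ is positive infinite), so
\[
|\partial^\alpha\widehat f(\xi)| \;=\; \Big|\sum_i c_i\,{^*(\partial^\alpha h_i)(\xi)}\Big| \;\leq\; N\,\rho^{-n_0-1} \;\leq\; \rho^{-n},
\]
with $n := n_0+2$, uniform in $\alpha$.

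For part (iii), ``$\supseteq$'' is immediate from Theorem~\ref{T: Properties of the Embedding}(iii): for $P\in\mathbb C[\Omega]$, $(\iota_\Omega\circ S_\Omega)(P) = \sigma(P) = \widehat{^*P}$ and $^*P\in{^\sigma\mathcal E(\Omega)}\subseteq\mathcal R_\rho(\Omega)$. For ``$\subseteq$'', assume $\iota_\Omega(T) = \widehat g$ with $g = \sum_{i=1}^N c_i\,{^*h_i}\in\mathcal R_\rho(\Omega)$, taking the $h_i$ linearly independent in $\mathcal E(\Omega)$ (and hence, via $S_\Omega$, in $\mathcal D'(\Omega)$). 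Pairing with $\varphi\in\mathcal D(\Omega)$ and combining Definition~\ref{D: Non-Standard Version of Egorov Algebra}(vii), Theorem~\ref{T: Properties of the Embedding}(i), and transfer,
\[
\langle T,\varphi\rangle \;=\; \langle\widehat g,\varphi\rangle \;=\; \sum_i c_i\int_\Omega h_i(x)\,\varphi(x)\,dx.
\]
By linear independence I can pick $\varphi_j\in\mathcal D(\Omega)$ with $\int h_i\varphi_j\,dx = \delta_{ij}$, forcing $c_j = \langle T,\varphi_j\rangle\in\mathbb C$ to be standard. Hence $g = {^*f}$ for $f := \sum_j c_j h_j\in\mathcal E(\Omega)$ and $T = S_\Omega(f)$, so the identity $\widehat g = \iota_\Omega(T)$ becomes the \emph{strict} equality $\sigma(f) = (\iota_\Omega\circ S_\Omega)(f)$ in $\widehat{^*\mathcal E}(\Omega)$.

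The main obstacle is the final step: extracting $f\in\mathbb C[\Omega]$ from this strict equality. Using Taylor's formula for ${^*f}$ around a near-standard $\xi$ together with the moment identity $\int(\eta-\xi)^\alpha\Delta(\xi-\eta)\,d\eta = \delta_{\alpha,0}$ built into the construction of $\Delta$ (Theorem~\ref{T: Regularization in *E(Omega)}(ii)), one computes
\[
({^*f}\star\Delta)(\xi) - {^*f}(\xi) \;=\; \int R_d(\eta,\xi)\,\Delta(\xi-\eta)\,d\eta\quad \text{for every } d\in\mathbb N,
\]
where $R_d$ is the order-$d$ Taylor remainder of $^*f$ at $\xi$. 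The strict equality requires this remainder-integral to vanish identically on $\mu(\Omega)$ for every $d$. The crux is to exploit the fact that Theorem~\ref{T: Regularization in *E(Omega)}(ii) yields the reproducing identity ${^*P}\star\Delta = {^*P}$ \emph{exactly only} on $\mathbb C[\Omega]$, so that $\Delta$-mollification cannot reproduce any non-polynomial smooth function pointwise on $\mu(\Omega)$; this forces $R_d\equiv 0$ for some $d$, i.e., $f$ is a polynomial of degree $\leq d$. Once $f\in\mathbb C[\Omega]$, $\iota_\Omega(T) = (\iota_\Omega\circ S_\Omega)(f)\in(\iota_\Omega\circ S_\Omega)[\mathbb C[\Omega]]$, completing the reverse inclusion.
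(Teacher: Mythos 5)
Parts (i) and (ii) of your argument are sound: the normal form $f=\sum_{i=1}^N c_i\,{^*h_i}$ with $c_i\in\mathcal M_\rho$, $h_i\in\mathcal E(\Omega)$ is legitimate because ${^\sigma\!\mathcal E}(\Omega)$ and $\mathcal M_\rho$ are each closed under sums and products, and your uniform-in-$\alpha$ estimate works since, for each standard $\alpha$ and each $\xi\in\mu(\Omega)$, ${^*(\partial^\alpha h_i)}(\xi)$ is near-standard, hence finite, hence bounded by $\rho^{-1}$. In (iii) the inclusion $\supseteq$ and your reduction of $\subseteq$ --- using linear independence and duality to force the coefficients $c_j$ to be standard, so that the hypothesis becomes the strict equality $\sigma(f)=(\iota_\Omega\circ S_\Omega)(f)$ for some $f\in\mathcal E(\Omega)$ --- are also correct. (The paper gives no proof of this theorem, so there is nothing to compare your route against.)

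The final step, however, is a genuine gap, and not a fixable one along the lines you indicate. Theorem~\ref{T: Regularization in *E(Omega)}(ii) asserts only that polynomials \emph{are} reproduced exactly by convolution with $\Delta$; it does not assert that \emph{only} polynomials are, so your ``crux'' assumes the conclusion, and ``the remainder integral vanishes for every $d$, hence $R_d\equiv 0$'' is a non sequitur (integration against $\Delta$ can annihilate a nonzero remainder). Worse, the statement you need fails for \emph{every} $\Delta$ as in Lemma~\ref{L: Non-Standard Delta-Function}: take $d=1$ and $f(x)=e^{x}$. Since $\zeta\mapsto e^{-\zeta}$ is a standard continuous function, the defining property of $\Delta$ gives $\int_{^*\mathbb R}e^{-\zeta}\Delta(\zeta)\,d\zeta=e^{0}=1$, and hence, by the change of variables $\zeta=\xi-\eta$,
\[
({^*\!f}\star\Delta)(\xi)=\int_{^*\mathbb R}{^*\!f}(\xi-\zeta)\,\Delta(\zeta)\,d\zeta=e^{\xi}\int_{^*\mathbb R}e^{-\zeta}\Delta(\zeta)\,d\zeta=e^{\xi}={^*\!f}(\xi)
\]
for \emph{all} $\xi\in{^*\mathbb R}$; the same happens for $\sin$, $\cos$ and every exponential-polynomial, since their translates span finite-dimensional spaces of standard continuous functions. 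Combined with $\Pi_{\mathbb R}\rest\mu(\mathbb R)=1$ and transfer of the Schwartz embedding, this yields the strict equality $(\iota_{\mathbb R}\circ S_{\mathbb R})(e^{x})=\sigma(e^{x})$, and ${^*e^{x}}\in{^\sigma\!\mathcal E}(\mathbb R)\subseteq\mathcal R_\rho(\mathbb R)$, so $\sigma(e^{x})$ lies in $\widehat{\mathcal R}_\rho(\mathbb R)\cap\iota_{\mathbb R}[\mathcal D^\prime(\mathbb R)]$ although $e^{x}$ is not a polynomial. So your strategy of extracting polynomiality from the listed properties of $\Delta$ cannot be completed; the computation in fact indicates that part (iii) (and likewise Corollary~\ref{C: Multiplication of Classical Functions}(ii), which you could otherwise have cited as a black box) can only hold if the fixed $\Delta$ is assumed to satisfy further conditions beyond those stated in Lemma~\ref{L: Non-Standard Delta-Function}, so any honest proof of the inclusion $\subseteq$ must identify and use such additional properties.
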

\begin{proof} We leave the verification to the reader.
\end{proof}


\begin{thebibliography}{99}

\bibitem{jCol84a} J. F. Colombeau, \emph{New Generalized Functions and Multiplication of Distributions}, Math. Studies 84, North-Holland-Amsterdam-New York-Oxford, 1984.

\bibitem{yEgorov90a} Yu. V. Egorov, {\em A contribution to the theory of generalized functions} (Russian). Uspekhi Mat Nauk (Moskva)
45, No 5 (1990), 3-40. \newline
\emph{English translation} in Russian Math. Surveys {\bf 45}:5  (1990), p. 1-49. 

\bibitem{lHormander90a} Lars H\"{o}rmander, {\em The Analysis of Linear Partial Differential Operators I}, volume \textbf{256} of Grundlehren der mathematischen Wissenschaften, Springer, Berlin, 1990.

\bibitem{tLindstrom} Tom Lindstr\o m, {\em An invitation to nonstandard analysis}, in: Nonstandard Analysis and its Applications, N. Cutland (Ed), Cambridge U. Press, 1988, p.~1-105.

\bibitem{LoebWolff} Peter A. Loeb and Manfred Wolff (Eds.), \emph{Nonstandard Analysis for the Working Mathematician}, Kluwer Academic Publishers, Dordrecht/Boston/London, 2000.

\bibitem{mOber92} M. Oberguggenberger, \emph{Multiplication of Distributions and Applications to Partial Differential Equations}, Pitman Research Notes Math., 259, Longman, Harlow, 1992. 

\bibitem{OberTod98} M. Oberguggenberger and T. Todorov, \emph{An Embedding of  Schwartz Distributions in the Algebra of Asymptotic Functions}, International J. Math. \& Math. Sci., Vol. 21, No. 3 (1998), p.417-428. 

\bibitem{mOber06} M. Oberguggenberger, \emph{Regularity Theory in Colombeau Algebras}, in Bulletin T. CXXXIII de l'Academie serbe des sciences et des arts, Classe des sciences math\'{e}matiques et naturelles, Sciences math\'{e}matique No 31, 2006, p. 147-162.
\bibitem{tdTod90}  Todor Todorov, {\em A Nonstandard Delta Function}, In: \emph{Proceedings of the American Mathematical Society}, Vol. 110,  Number 4, 1990, p.~1143--1144.

\bibitem{tdTod92} Todor Todorov, {\em Pointwise Kernels of Schwartz Distributions}, In: \emph{Proceedings of the American Mathematical Society}, Vol. 114,  No 3,  March 1992, p.~817--819.

\bibitem{tdTod96} Todor Todorov, {\em An Existence of Solutions for Linear PDE with $C^{\infty}$-Coefficients in an Algebra of Generalized Functions}, In: \emph{Transactions of the American Mathematical Society}, Vol. 348, 2, Feb. 1996, p.~ 673--689 \href{http://www.ams.org/journals/tran/1996-348-02}{\textcolor{blue}{\tt http://www.ams.org/journals/tran/1996-348-02}}.

\bibitem{TodVern08} Todor Todorov and Hans Vernaeve, {\em Full Algebra of Generalized Functions and Non-Standard  Asymptotic Analysis}, In: \emph{Logic and Analysis}, Springer, Vol. 1, Issue 3, 2008, pp. 205-234 (available at:
(http://www.logicandanalysis.com/index.php/jla/article/view/193/79) and/or at: \href{http://arxiv.org/abs/0712.2603}{\textcolor{blue}{\tt http://arxiv.org/abs/0712.2603}}).

\bibitem{HVernPHD}	Hans Vernaeve, \emph{Nonstandard Contributions to the Theory of Generalized Functions}, Ph.D. Thesis, University of Gent, 2002.

\bibitem{HVern03} Hans Vernaeve, \emph{Optimal Embeddings of Distributions into Algebras}, Proceedings of the Edinburgh Mathematical Society (2003) \textbf{46}, p. 373-378.

\bibitem{HVern21} Hans Vernaeve, \emph{Regularity of nonlinear generalized functions: A Counterexample in the nonstandard setting}, in Examples and Counterexamples \textbf{1} (2021) 100021.

\bibitem{vVladimirov} V. S. Vladimirov, {\em Generalized Functions in Mathematical Physics}, Mir-Publisher, Moscow, 1979.
\end{thebibliography}
\end{document}